\newtheorem{theorem}{Theorem}[section]
\newtheorem{definition}[theorem]{Definition}
\newtheorem{remark}[theorem]{Remark}
\title[Quantum Sobolev spaces of Hilbert-Schmidt operators]{On quantum Sobolev spaces  consisting of Hilbert-Schmidt operators}
\author{Anat\'e K. Lakmon and Yaogan Mensah}
\address{Department of Mathematics, University of Lom\'e, Togo \\1 P.O.Box 1515 Lom\'e , Togo} 
\email{\textcolor[rgb]{0.00,0.00,0.84}{davidlakmon@gmail.com, ORCID :0000-0002-4261-2107 }}
\address{Department of Mathematics, University of Lom\'e, Togo \\1 P.O.Box 1515 Lom\'e , Togo} 
\email{\textcolor[rgb]{0.00,0.00,0.84}{mensahyaogan2@gmail.com,  ORCID : 0000-0002-0028-5515}}
\begin{document}
\maketitle
%\hbox{\it This paper is dedicated to the memory of Professor K.K. Siggini}
\begin{abstract}
In the present paper, we study quantum Sobolev spaces whose elements are operators of the Hilbert-Schmidt class. We construct these Sobolev spaces from the Fourier transform for  operators. Next, we obtain continuous embedding theorems. Finally, we delve into solving partial differential equations where the unknown is an operator. The results  have potential applications in quantum physics, providing a new theoretical basis for relevant research.   
\end{abstract}
{\small Keywords:  Hilbert-Schmidt operator, quantum Fourier transform, locally compact abelian group,  quantum Sobolev space.
\newline
2020 Mathematics Subject Classification: 47B90,   43A25, 47A62, 46N50.}

\section{Introduction}
We can construct Sobolev spaces from a spectral foundation, relying, for example, on the classical or generalized Fourier transform or any other integral transform of the same type that possesses the essential features of the Fourier transform. These include, among others, the Sobolev spaces of Bessel potential type. The classical model that can be cited includes the Sobolev spaces $H^s(\mathbb{R}^n)$ consisting of tempered distributions $u$ such that $(1+|\xi|^2)^{\frac{s}{2}}\widehat{u}\in L^2(\mathbb{R}^n)$,  where $\widehat{u}$ represents the Fourier transform of $u$.  The generalization to topological-algebraic structures in the context of harmonic analysis has seen several extensions. These generalizations include Sobolev spaces on locally compact topological groups, Sobolev spaces on locally compact hypergroups, Sobolev spaces on homogeneous spaces, etc. We emphasize that in this framework weak differentiation is not taken into account, although the results obtained can be invested in solving partial differential equations.
Work already carried out in this context on (locally compact) groups includes the references \cite{Gorka1, Gorka2, Mensah2, Kumar,Mensah4}. As for hypergroups associated with Gelfand pairs, we refer \cite{Bataka1,  Saha}. 

Quantum Sobolev spaces on phase space have been investigated. In \cite{Lafleche},  Lafleche investigated the quantum analogue of the classical Sobolev inequalities in the phase space, with the quantum Sobolev norms defined in terms of Schatten norms of commutators.  He also introduced quantum Besov spaces. 
In \cite{Fulsche2}, Fulsche and  van Luijk gave a new proof of Cordes' characterization of Heisenberg-smooth operators  by using the phase-space formalism of Quantum harmonic analysis and the quantumSobolev spaces introduced in \cite{Lafleche}; they derived Schatten versions of the Calder\'on-Vaillancourt theorem. 
Fulsche and  Galke \cite{Fulsche}  extended  the elements of quantum harmonic analysis as introduced by Werner \cite{Werner} to abelian phase space by which they mean a locally compact abelian group endowed with a Heisenberg multiplier. They then get a joint harmonic analysis of functions and operators. Equipped with such a tool, it appears interesting to take the path that will lead to the construction of Sobolev-type spaces whose elements are operators on a Hilbert space, thus generalizing the results that are known in the context of functions in Lebesgue spaces.

The aim of this paper is to examine a more general aspect of the Sobolev spaces in quantum setting.  We will rely on Schatten-von Neumann spaces instead of classical Lebesgue spaces.  In particular, the Sobolev spaces that will be constructed, and which can modestly be named quantum Sobolev spaces, will be composed of operators of the Hilbert-Schmidt class. We provide two applications of the obtained model to partial differential equations after expressing the associated Laplacian using the quantum Fourier transform.

The rest of the article is presented as follows.
In Section \ref{Preliminaries}, we gather the necessary data to make the document somewhat self-contained. Section \ref{Quantum Sobolev spaces} deals with quantum Sobolev spaces. Applications are provided in Section \ref{Applications}.

\section{Preliminaries}\label{Preliminaries}
In this section, we recall elements that we will use to facilitate the understanding of the content of the article. These are essentially the operators of the Schatten-von Neumann class and the quantum Fourier transform as developed in  \cite{Fulsche}.
\subsection{Schatten class operators}
To write this subsection, we relied on the book by K. Zhu \cite{Zhu}.
Let $H$ be a separable complex Hilbert space. Denote by $\mathcal{B}(H)$ the space of bounded linear operators on $H$ and by $\mathcal{K}(H)$ the set of compact operators on $H$. It is known that $\mathcal{K}(H)$ is a closed self-adjoint two-sided ideal of $\mathcal{B}(H)$. If $T\in \mathcal{K}(H)$ then there exists orthonormal sets $(e_n)$ and $(\varepsilon_n)$ in $H$ such that 
$$Tx=\sum\limits_n\lambda_n\langle x,e_n\rangle \varepsilon_n, \, x\in H, $$
where the non-increasingly arranged sequence $(\lambda_n)_n$   are the singular values of $T$,  that is, the $\lambda_n$  are  the eigenvalues of the operator $|T|:=\sqrt{T^*T}$. Let $0< p<\infty$. The Schatten $p$-class of $H$, denoted $S_p(H)$ is defined as the set of compact operators on $H$ such that the sequence of its singular values $\{\lambda_1,\lambda_2,\cdots\}$ is $p$-summable. For $1\leq p<\infty$, the space $S_p(H)$ is a Banach space  when we equip it with the norm given by 
\begin{equation}
\|T\|_p=\left(\sum\limits_{n=1}^\infty|\lambda_n|^p\right)^{\frac{1}{p}}.
\end{equation}
In particular,  $S_1(H)$ is the space of  trace class operators and $S_2(H)$ the class of Hilbert-Schmidt operators on $H$. By convention, $S_\infty (H)=\mathcal{B}(H)$.

\subsection{Quantum Fourier transform}
We briefly recall the quantum Fourier transform and its properties as described in  \cite{Fulsche}. Throughout the paper, we consider a locally compact abelian group (LCA group) $G$ that is also assumed to be Hausdorff. The Pontrjagin dual of $G$ is denoted $\widehat{G}$. We recall that in this case, $\widehat{G}$ is also a locally compact abelian group and $\widehat{\widehat{G}}\simeq G$.
Let $H$ be a (separable) complex Hilbert space. Let $U(H)$ denote the group of unitary operators on $H$. The first notion that comes into play is that of projective representation. (However, we should note that this notion is not related to the commutativity of the group.)

\begin{definition}
   A unitary projective representation of $G$ over a Hilbert space $H$ is a map $\pi : G\rightarrow U(H)$ which satisfies the relation 
 \begin{equation}   
\forall x,y\in G, \pi(x)\pi(y)=m(x,y) \pi(xy),
\end{equation}
where  the function $m: G\times G \rightarrow \mathbb{T}$ {\rm (the unit circle)} is called  the multiplier (or cocycle) of the projective representation $\pi$. 
\end{definition}
Let us mention that the projective representation considered in this paper is assumed to be irreducible. 
 Throughout the paper, we will write $U_x$ instead of  $\pi(x)$.
 
By the associativity of $G$, the multiplier $m$ has the following property :  
$$\forall x,y,z\in G, \, m(x,y)m(xy,z)=m(x, yz)m(y,z).$$
Let $e$ denote the neutral element of $G$. One may assume that $U_e=I$, the identity operator on $H$. Therefore, $$\pi(x)^*=\overline{m(x,x^{-1})}\pi(x^{-1})$$
where $\pi(x)^*$ is the adjoint of the operator $\pi(x)$.
Set $\sigma(x,y)=\displaystyle\frac{m(x,y)}{m(y,x)}$. If the map $x\longmapsto \sigma(x,\cdot)$, which is a continuous morphism from $G$ into $\widehat{G}$, turns out to be a topological isomorphism, then $m$ is called a Heisenberg multiplier (and  $\sigma$ is  called a symplectic bicharacter of $G$ \cite{Digernes}). Obviously,
$$\forall x,y\in G, \,U_xU_y=\sigma(x,y)U_yU_x.$$
Important assumptions that allowed the theory in \cite{Fulsche} to work are :  
\begin{enumerate}
    \item the square integrability of the projective representation $\pi$,  that is, there exists non-zero vectors $\phi,\psi\in H$ such that the mapping $x\mapsto \langle \pi (x)\phi,\psi\rangle$ belongs to $L^2(G)$, 
    \item the multiplier $m$ is a Heisenberg multiplier, 
    and  satisfies : 
    $$m(x,y)=m(x^{-1},y^{-1}),\, \forall x,y\in G.$$ 
\end{enumerate}
Sometimes, it will be assumed that the projective representation is integrable, that is, there exists a non-zero vector $\psi \in H$ such that the mapping $x\mapsto\langle \pi (x)\psi,\psi\rangle$ belongs to $L^1(G)$, 

When dealing with a Heisenberg multiplier, one can identify $G$ and $\widehat{G}$ through the bicharacter $\sigma$. 
We recall the definition of the quantum Fourier transform (the Fourier transform of an operator) and its properties that one has the right to expect. 
\begin{definition}\cite[Definition 6.12]{Fulsche}
For  $T\in S_1(H)$, the Fourier transform of $T$
is given by
\begin{equation}
   \mathcal{F}_U(T)(\xi)=tr(TU_\xi^\ast),\,\xi \in \widehat{G}. 
\end{equation}
\end{definition}
Here, $tr(A)$ is the trace of the operator $A$.  
If $T\in S_1(H)$ is such that $\mathcal{F}_U(T)\in L^1(\widehat{G})$, then the reconstruction formula is given by
\begin{equation}
    T=\int_{\widehat{G}}\mathcal{F}_U(T)(\xi)U_\xi d\xi.
\end{equation}

\begin{theorem}(Riemann-Lebesgue lemma)\cite[Proposition 6.16]{Fulsche}
If $T\in  S_1(H)$. Then,  $\mathcal{F}_U(T)\in C_0(\widehat{G})$ and $\|\mathcal{F}_U(T)\|_\infty\leq \|T\|_{S_1(H)}$, where $\|\mathcal{F}_U(T)\|_\infty=\sup\limits_{\xi\in \widehat{G}}|\mathcal{F}_U(T)(\xi)|$. 
\end{theorem}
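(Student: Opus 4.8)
\noindent
The plan is to separate the two assertions: the norm estimate is a direct consequence of trace duality, while the membership $\mathcal{F}_U(T)\in C_0(\widehat{G})$ will be reached by a density reduction to rank-one operators, where the square-integrability hypothesis on $\pi$ can be exploited.

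First I would dispose of the inequality. For every $\xi\in\widehat{G}$ the operator $U_\xi$ is unitary, hence $\|U_\xi^\ast\|_{\mathcal{B}(H)}=1$. Combining this with the standard duality estimate $|tr(TS)|\le\|T\|_{S_1(H)}\|S\|_{\mathcal{B}(H)}$, valid for $T\in S_1(H)$ and $S\in\mathcal{B}(H)$, gives
\begin{equation*}
|\mathcal{F}_U(T)(\xi)|=|tr(TU_\xi^\ast)|\le\|T\|_{S_1(H)}\qquad(\xi\in\widehat{G}),
\end{equation*}
and taking the supremum over $\xi$ yields $\|\mathcal{F}_U(T)\|_\infty\le\|T\|_{S_1(H)}$.

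For the $C_0$ statement I would argue by density. Let $\mathcal{C}$ be the set of $T\in S_1(H)$ for which $\mathcal{F}_U(T)\in C_0(\widehat{G})$. It is clearly a linear subspace, and the inequality just proved, together with the completeness of $\big(C_0(\widehat{G}),\|\cdot\|_\infty\big)$, shows that $\mathcal{C}$ is closed in $S_1(H)$: if $T_n\to T$ in $S_1(H)$ then $\mathcal{F}_U(T_n)\to\mathcal{F}_U(T)$ uniformly, and a uniform limit of functions in $C_0$ again lies in $C_0$. Since the finite-rank operators are dense in $S_1(H)$ and $\mathcal{C}$ is a closed subspace, it suffices to prove $\phi\otimes\psi\in\mathcal{C}$ for rank-one operators, where $(\phi\otimes\psi)x=\langle x,\psi\rangle\phi$. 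A short computation gives $(\phi\otimes\psi)U_\xi^\ast=\phi\otimes(U_\xi\psi)$, whence
\begin{equation*}
\mathcal{F}_U(\phi\otimes\psi)(\xi)=tr\big(\phi\otimes(U_\xi\psi)\big)=\langle\phi,U_\xi\psi\rangle,
\end{equation*}
so that the transform of a rank-one operator is exactly a matrix coefficient of $\pi$.

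It remains to show that these matrix coefficients belong to $C_0(\widehat{G})$, and this is where the main difficulty lies. Continuity is immediate from the strong continuity of $\pi$. For the decay, I would invoke the square-integrability hypothesis: through the orthogonality relations attached to a square-integrable projective representation, the coefficients $\xi\mapsto\langle\phi,U_\xi\psi\rangle$ lie in $L^2(\widehat{G})$ for $\phi,\psi$ in a dense subspace of $H$, the corresponding operators $\phi\otimes\psi$ spanning a dense part of $S_1(H)$. A continuous, square-integrable function on the LCA group $\widehat{G}$ must vanish at infinity, as a disjoint-translates argument shows: were $|\langle\phi,U_\xi\psi\rangle|$ bounded below along a net leaving every compact set, the near-identity behaviour of the coefficient (the cocycle having modulus one) would produce infinitely many disjoint neighbourhoods on which the modulus stays bounded below, contradicting membership in $L^2$. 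This places the matrix coefficients in $C_0(\widehat{G})$, hence every such rank-one operator lies in $\mathcal{C}$, and the density argument closes the proof. The delicate point to handle with care is precisely this passage from square-integrability to uniform decay, together with verifying that enough rank-one operators have $L^2$ coefficients to guarantee density.
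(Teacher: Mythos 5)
You should first know that the paper contains no proof of this statement at all: it is imported verbatim as background from \cite[Proposition 6.16]{Fulsche}, so there is no internal argument to compare yours against, and what follows assesses your proof on its own merits. Your route is sound and essentially complete. The norm bound via trace duality is correct, and so are the reduction steps: your set $\mathcal{C}$ is closed in $S_1(H)$ by the uniform estimate, finite-rank operators are dense in $S_1(H)$, and the computation $\mathcal{F}_U(\phi\otimes\psi)(\xi)=\langle\phi,U_\xi\psi\rangle$ is right (continuity of these coefficients rests on strong continuity of $\pi$, a standing assumption in \cite{Fulsche} that the present paper never states). The two points you flag as delicate do close, and it is worth recording how. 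For the $L^2$-membership: an LCA group is unimodular, so the Godement/Duflo--Moore orthogonality relations for square-integrable projective representations upgrade the existence of \emph{one} non-zero pair with square-integrable coefficient to square-integrability of \emph{all} matrix coefficients; the irreducibility this requires holds in the Fulsche--Galke setting because of the Heisenberg multiplier (Stone--von Neumann), or one can decompose the representation as a multiple of the unique irreducible one and sum the coefficients absolutely in $L^2$ via Cauchy--Schwarz. This is exactly the Moyal-type identity $\|\langle\phi,U_\cdot\psi\rangle\|_{L^2(\widehat{G})}=\|\phi\|\,\|\psi\|$ underlying the Plancherel theorem the paper quotes (Theorem \ref{Plancherel}); note, however, that citing Plancherel directly would cost you the extra integrability hypothesis, whereas the orthogonality relations need only square-integrability, which matches the hypotheses of the present statement. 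For the decay: make explicit that you are \emph{not} using ``continuous $+$ $L^2$ $\Rightarrow$ $C_0$'' (which is false, by spike counterexamples on $\mathbb{R}$), but rather the uniform estimate $\bigl|\,|\langle\phi,U_{\xi\eta}\psi\rangle|-|\langle\phi,U_\xi\psi\rangle|\,\bigr|\le\|\phi\|\,\|U_\eta\psi-\psi\|$ valid for all $\xi$, which holds because $U_{\xi\eta}=\overline{m(\xi,\eta)}\,U_\xi U_\eta$ with $|m|\equiv 1$; a function on a locally compact group whose modulus is uniformly continuous and square-integrable does vanish at infinity by the disjoint-translates argument. With these two points written out carefully, your proof is complete, and it is in all likelihood the same argument as the one in \cite{Fulsche}.
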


We have the Plancherel's theorem for operators : 
\begin{theorem}\cite[Theorem 6.23]{Fulsche}\label{Plancherel}
Assume that the representation is integrable.
Then, the Fourier transform extends to a unitary operator $\mathcal{F}_U : S_2(H)\rightarrow L^2(\widehat{G})$ with adjoint  $\mathcal{F}_U^{-1}:  L^2(\widehat{G})\rightarrow  S_2(H)$.   
\end{theorem}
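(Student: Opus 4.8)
The plan is to establish the Plancherel identity on a dense subspace, extend the Fourier transform by continuity to an isometry, and then promote it to a unitary by proving surjectivity. First I would recall that the finite-rank operators (equivalently, the dense subspace $S_1(H)\cap S_2(H)$) are dense in $S_2(H)$ and that $\mathcal{F}_U$ is already defined there via the trace formula. The inner product on $S_2(H)$ is $\langle S,T\rangle_{S_2}=tr(T^*S)$, so it suffices to prove the Parseval identity $\langle\mathcal{F}_U(S),\mathcal{F}_U(T)\rangle_{L^2(\widehat G)}=\langle S,T\rangle_{S_2(H)}$ on this dense subspace and then invoke the standard extension-by-continuity principle for bounded densely-defined operators between Hilbert spaces.

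The heart of the argument is the verification of this identity, which I would reduce to rank-one operators. For $T=\phi\otimes\psi$ (so that $Tx=\langle x,\psi\rangle\phi$) a direct trace computation gives $\mathcal{F}_U(T)(\xi)=tr(TU_\xi^*)=\langle U_\xi^*\phi,\psi\rangle=\overline{\langle U_\xi\psi,\phi\rangle}$, whence $|\mathcal{F}_U(T)(\xi)|^2=|\langle U_\xi\psi,\phi\rangle|^2$. Integrating over $\widehat G$ (using the identification of $G$ with $\widehat G$ through the symplectic bicharacter $\sigma$) and applying the orthogonality relations for the square-integrable projective representation $\pi$---the analogue for projective representations of the Schur / Duflo--Moore orthogonality relations, whose validity is exactly what the square-integrability and integrability hypotheses secure---yields $\int_{\widehat G}|\langle U_\xi\psi,\phi\rangle|^2\,d\xi=\|\phi\|^2\|\psi\|^2=\|T\|_{S_2(H)}^2$, provided the Plancherel measure on $\widehat G$ is normalized so that the formal-degree constant equals one. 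Polarization then upgrades this norm equality to the full Parseval identity on rank-one operators, and linearity extends it to finite-rank operators.

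With the isometry in hand, I would extend $\mathcal{F}_U$ to all of $S_2(H)$ by density; being an isometry it has closed range, so unitarity reduces to surjectivity, i.e.\ to showing that its range is dense in $L^2(\widehat G)$. Here I would use the reconstruction formula, which on the dense subspace exhibits $T=\int_{\widehat G}\mathcal{F}_U(T)(\xi)U_\xi\,d\xi$ and thereby provides an explicit left inverse; equivalently, one shows that no nonzero $f\in L^2(\widehat G)$ is orthogonal to every $\mathcal{F}_U(\phi\otimes\psi)$, which follows from the completeness half of the same orthogonality relations. Once surjectivity is established, $\mathcal{F}_U$ is a surjective isometry, hence unitary, and automatically $\mathcal{F}_U^{-1}=\mathcal{F}_U^{*}$, with the inverse given concretely by the reconstruction integral above.

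The main obstacle I anticipate is not the formal extension-by-density step but the precise form and normalization of the orthogonality relations in the projective (Heisenberg-multiplier) setting: one must confirm that the relevant formal-degree constant is exactly $1$ under the chosen Plancherel normalization, so that the map is genuinely isometric rather than merely a topological isomorphism, and one must have the completeness statement available to secure surjectivity. Both of these are where the integrability hypothesis and the structural assumptions on the multiplier $m$ (square-integrability of $\pi$ together with $m(x,y)=m(x^{-1},y^{-1})$) do the essential work.
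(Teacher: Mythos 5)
This theorem is not proved in the paper at all: it is imported verbatim, with a citation, from Fulsche--Galke \cite[Theorem 6.23]{Fulsche}, so there is no internal proof to compare yours against; the relevant benchmark is the proof in that reference. Your overall architecture --- Parseval identity on rank-one operators via orthogonality relations for the square-integrable projective representation, polarization and linearity to finite-rank operators, extension by density to $S_2(H)$, then surjectivity to upgrade the isometry to a unitary --- is the standard route to Plancherel theorems of this type. The isometry half of your argument is sound: the computation $\mathcal{F}_U(\phi\otimes\psi)(\xi)=\operatorname{tr}\bigl((\phi\otimes\psi)U_\xi^*\bigr)=\langle\phi,U_\xi\psi\rangle$ is correct, and you rightly flag the normalization issue (the formal degree must equal $1$ under the Haar measure fixed by the identification $G\simeq\widehat{G}$ via $\sigma$).

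The gap is in the surjectivity step, and it is a real one. First, a left inverse never yields surjectivity (every isometry has a left inverse, namely its adjoint), so the reconstruction formula by itself cannot do the job you assign to it. Second, the Duflo--Moore orthogonality relations have no ``completeness half'': for a general irreducible square-integrable (projective) representation of a unimodular group, the matrix coefficients $\xi\mapsto\langle U_\xi\psi,\phi\rangle$ span only a proper closed subspace of $L^2$ (the isotypic component of the twisted regular representation), so the transform built from them is an isometry that is typically \emph{not} onto. What forces density of the range in the present setting is precisely the Heisenberg (nondegeneracy) hypothesis on $\sigma$: by Stone--von Neumann-type uniqueness for Heisenberg multipliers (cf. \cite{Digernes}), the $m$-twisted regular representation is a multiple of the unique irreducible $m$-projective representation, hence its matrix coefficients are total in $L^2(\widehat{G})$. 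You do gesture at the multiplier assumptions ``doing essential work,'' but the mechanism you name (completeness as part of the orthogonality relations) is not the correct one, and without the Stone--von Neumann input the proof as written does not close.
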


\begin{theorem}\cite[Proposition  6.25]{Fulsche}\label{Hausdorff-Young}
Assume that the representation is integrable.
Let  $p\in [1, 2]$  and let $q$ be such that $\dfrac{1}{p}+\dfrac{1}{q}=1$. Then,
$$\|\mathcal{F}_U(T)\|_{L^q(\widehat{G})}\leq \|T\|_{S_p(H)},\, T\in S_p(H).$$
\end{theorem}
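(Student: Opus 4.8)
The plan is to deduce this inequality from the two endpoint results already at our disposal by means of complex interpolation. At one end, the Riemann-Lebesgue lemma gives that $\mathcal{F}_U$ maps $S_1(H)$ into $L^\infty(\widehat{G})$ with $\|\mathcal{F}_U(T)\|_\infty\leq\|T\|_{S_1(H)}$, so the operator norm of $\mathcal{F}_U\colon S_1(H)\to L^\infty(\widehat{G})$ is at most $1$. At the other end, Theorem \ref{Plancherel} asserts that $\mathcal{F}_U\colon S_2(H)\to L^2(\widehat{G})$ is unitary, hence bounded with norm exactly $1$. These two statements correspond precisely to the extreme values $p=1$ (so $q=\infty$) and $p=2$ (so $q=2$) of the desired estimate.

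First I would set up the interpolation couples. The Schatten classes form a complex interpolation scale: for $0<\theta<1$ one has $[S_1(H),S_2(H)]_\theta=S_p(H)$ isometrically, where $\frac{1}{p}=(1-\theta)+\frac{\theta}{2}=1-\frac{\theta}{2}$. Likewise the Lebesgue spaces satisfy $[L^\infty(\widehat{G}),L^2(\widehat{G})]_\theta=L^q(\widehat{G})$ with $\frac{1}{q}=\frac{\theta}{2}$. Since $\frac{1}{p}+\frac{1}{q}=1$ holds identically in $\theta$, and as $\theta$ runs over $(0,1)$ the parameter $p$ sweeps out exactly the interior of $[1,2]$, the endpoints being covered separately by the two theorems above.

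Next I would invoke the Riesz-Thorin (Stein) interpolation theorem. Because $\mathcal{F}_U$ is bounded from $S_1(H)$ to $L^\infty(\widehat{G})$ with norm $M_0\leq 1$ and from $S_2(H)$ to $L^2(\widehat{G})$ with norm $M_1=1$, interpolation yields that $\mathcal{F}_U$ is bounded from $S_p(H)$ to $L^q(\widehat{G})$ with norm at most $M_0^{1-\theta}M_1^{\theta}\leq 1$, which is exactly the claimed estimate. A preliminary density step is harmless: finite-rank operators lie in $S_1(H)\cap S_2(H)$ and are dense in every $S_p(H)$ with $p<\infty$, so it suffices to prove the inequality for such operators and then extend by continuity.

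The main obstacle I anticipate is not the interpolation mechanism itself but two supporting points. First, the identification $[S_1(H),S_2(H)]_\theta=S_p(H)$ is a standard but nontrivial fact about Schatten ideals (it follows from the Gohberg-Krein theory, as developed in Simon's trace-ideal monograph); I would cite it rather than reprove it. Second, one must check that the two operators being interpolated are genuinely restrictions of a single map: this is ensured because the Plancherel extension of Theorem \ref{Plancherel} agrees with the original trace-formula definition $\mathcal{F}_U(T)(\xi)=tr(TU_\xi^\ast)$ on the common dense domain $S_1(H)\cap S_2(H)$, so consistency across the scale holds and the interpolation conclusion transfers unambiguously to all of $S_p(H)$.
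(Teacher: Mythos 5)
Your proposal is correct, but note that this paper contains no proof of the statement at all: the theorem is quoted verbatim from \cite[Proposition 6.25]{Fulsche}, so there is no internal argument to compare against. Your interpolation argument --- combining the Riemann--Lebesgue endpoint $\mathcal{F}_U\colon S_1(H)\to L^\infty(\widehat{G})$ with the Plancherel endpoint $\mathcal{F}_U\colon S_2(H)\to L^2(\widehat{G})$ via the complex method, using $[S_1(H),S_2(H)]_\theta=S_p(H)$ and $[L^\infty(\widehat{G}),L^2(\widehat{G})]_\theta=L^q(\widehat{G})$ --- is precisely the standard route by which such Hausdorff--Young inequalities for the operator Fourier transform are established in the quantum harmonic analysis literature, going back to Werner, and it is essentially the argument of the cited source. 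The two supporting facts you flag (the Schatten interpolation identity, which is in Simon's trace-ideal monograph, and the consistency of the Plancherel extension with the trace formula on the dense subclass $S_1(H)\subset S_2(H)$) are exactly the right points to isolate, and citing them rather than reproving them is appropriate here.
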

Hereafter is the Hausdorff-Young inequality for $\mathcal{F}_U^{-1}$. 
\begin{theorem}\cite[Proposition  6.26]{Fulsche}\label{Hausdorff-Young reverse}
Assume that the representation is integrable.
Let  $p\in [1, 2]$  and let $q$ be such that $\dfrac{1}{p}+\dfrac{1}{q}=1$. Then,
$$\|\mathcal{F}_U^{-1}(f)\|_{S_q(H)}\leq \|f\|_{L^p(\widehat{G})},\, f\in L^p(\widehat{G}).$$
\end{theorem}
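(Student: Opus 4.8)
The plan is to derive the inverse inequality for $\mathcal{F}_U^{-1}$ by duality from the forward Hausdorff--Young inequality (Theorem \ref{Hausdorff-Young}) together with Plancherel's theorem (Theorem \ref{Plancherel}). Write $q'$ for the conjugate exponent of $q$, so that $1/q + 1/q' = 1$; since $p\in[1,2]$ forces $q\in[2,\infty]$, we have $q'\in[1,2]$, which is exactly the range in which the forward inequality is available. The two endpoints are immediate: for $p=q=2$ the claim is Plancherel (indeed an isometry), while for $p=1$, $q=\infty$ one has $S_\infty(H)=\mathcal{B}(H)$ and the reconstruction formula $\mathcal{F}_U^{-1}(f)=\int_{\widehat G} f(\xi)U_\xi\,d\xi$ gives $\|\mathcal{F}_U^{-1}(f)\|\le \int_{\widehat G}|f(\xi)|\,\|U_\xi\|\,d\xi=\|f\|_{L^1(\widehat G)}$ because each $U_\xi$ is unitary. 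It therefore suffices to treat $p\in(1,2)$.

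For the main range I would argue on a dense subspace and use Schatten duality. Recall that for $A\in S_q(H)$ one has $\|A\|_{S_q(H)}=\sup\{\,|tr(AB^\ast)| : B\in S_{q'}(H),\ \|B\|_{S_{q'}(H)}\le 1\,\}$. Taking $f\in L^p(\widehat G)\cap L^2(\widehat G)$ and $B\in S_{q'}(H)\cap S_2(H)$, I would rewrite the pairing as an inner product in $S_2(H)$ and transfer $\mathcal{F}_U^{-1}$ onto the other factor by means of the adjoint relation $\mathcal{F}_U^{-1}=\mathcal{F}_U^{\,\ast}$ supplied by Theorem \ref{Plancherel}:
\[
tr\big(\mathcal{F}_U^{-1}(f)B^\ast\big)=\langle \mathcal{F}_U^{-1}(f),B\rangle_{S_2(H)}=\langle f,\mathcal{F}_U(B)\rangle_{L^2(\widehat G)}=\int_{\widehat G} f(\xi)\,\overline{\mathcal{F}_U(B)(\xi)}\,d\xi.
\]
Now H\"older's inequality with the conjugate pair $(p,q)$ gives $|tr(\mathcal{F}_U^{-1}(f)B^\ast)|\le \|f\|_{L^p(\widehat G)}\,\|\mathcal{F}_U(B)\|_{L^{q}(\widehat G)}$, and the forward Hausdorff--Young inequality applied to $B$ with Schatten index $q'\in[1,2]$ yields $\|\mathcal{F}_U(B)\|_{L^q(\widehat G)}\le \|B\|_{S_{q'}(H)}\le 1$. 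Hence $|tr(\mathcal{F}_U^{-1}(f)B^\ast)|\le \|f\|_{L^p(\widehat G)}$, and taking the supremum over the unit ball of $S_{q'}(H)$ produces $\|\mathcal{F}_U^{-1}(f)\|_{S_q(H)}\le \|f\|_{L^p(\widehat G)}$ on the dense set $L^p\cap L^2$; a continuity and density argument then extends both the operator $\mathcal{F}_U^{-1}$ and the estimate to all of $L^p(\widehat G)$.

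The step requiring the most care is the transfer identity: the adjoint relation $\mathcal{F}_U^{-1}=\mathcal{F}_U^{\,\ast}$ from Plancherel is an $L^2$/$S_2$ statement, so the chain of equalities above is legitimate only once $f$ and $B$ are taken in $L^2(\widehat G)$ and $S_2(H)$ respectively; this is why I restrict to the dense intersections and only afterwards pass to the limit. One must also confirm the Schatten duality $\|\cdot\|_{S_q}=\sup_{\|B\|_{S_{q'}}\le1}|tr(\,\cdot\,B^\ast)|$ across the whole range $q\in[2,\infty]$, including the identification $(S_1(H))^\ast=\mathcal{B}(H)$ at the endpoint. An alternative route, avoiding duality, is to interpolate $\mathcal{F}_U^{-1}$ directly between the two endpoint bounds $L^1\to S_\infty$ and $L^2\to S_2$ by the Riesz--Thorin theorem; this is clean provided one invokes the fact that the Schatten classes form a complex interpolation scale, i.e. $[S_{q_0}(H),S_{q_1}(H)]_\theta=S_q(H)$.
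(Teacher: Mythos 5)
This statement is not proved in the paper at all: it is quoted verbatim from Fulsche and Galke \cite{Fulsche} (their Proposition 6.26) as a preliminary, so there is no internal proof to compare against, and any correct self-contained argument is a genuine addition. Judged on its own merits, your proof is correct and uses only the other quoted preliminaries. The endpoints are handled properly ($p=q=2$ is exactly Theorem \ref{Plancherel}; $p=1$, $q=\infty$ follows from the reconstruction formula and unitarity of each $U_\xi$), and the duality step for $1<p<2$ is sound: note that your $q'$ is simply $p$, so the Schatten index fed into the forward inequality (Theorem \ref{Hausdorff-Young}) lies in the allowed range $[1,2]$, and since $q'=p\le 2$ every $B\in S_{q'}(H)$ automatically belongs to $S_2(H)$, which makes the transfer identity $tr\bigl(\mathcal{F}_U^{-1}(f)B^\ast\bigr)=\langle f,\mathcal{F}_U(B)\rangle_{L^2(\widehat G)}$ legitimate for $f\in L^p\cap L^2$ without any extra approximation. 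The one fact you should cite explicitly is the converse-H\"older characterization of the Schatten norm: finiteness of $\sup\{|tr(AB^\ast)|:\|B\|_{S_{q'}}\le 1\}$ implies $A\in S_q(H)$ with that value as its norm, which applies here because $A=\mathcal{F}_U^{-1}(f)$ is already compact (Hilbert--Schmidt). You should also record the consistency of the $L^1$ weak-integral definition of $\mathcal{F}_U^{-1}$ with the $L^2$ one on $L^1\cap L^2$, since the statement tacitly presupposes a single operator on all of $L^p$. For comparison, the standard route in the literature (and the natural reading of the cited source) is your stated alternative: Riesz--Thorin interpolation of $\mathcal{F}_U^{-1}$ between the endpoint bounds $L^1\to S_\infty$ and $L^2\to S_2$, which trades your duality and density bookkeeping for the fact that the Schatten classes form a complex interpolation scale. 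Both arguments are valid; the duality one has the advantage of resting only on results already stated in this paper.
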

\section{Quantum Sobolev spaces : continuous embeddings} \label{Quantum Sobolev spaces}
This section contains our main results. We introduce Sobolev spaces consisting of Hilbert-Schmidt operators. We then prove continuous embedding results.

Let $\gamma : \widehat{G} \rightarrow (0,\infty) $ be a mesurable function. Let $s$ be a positive real number. 
\begin{definition}
We call  (quantum) Sobolev space the following set:
\begin{equation}
\mathfrak{H}^s_\gamma(G,H)=\left\{T\in S_2(H) : \int_{\widehat{G}}(1+\gamma(\xi)^2)^s|\mathcal{F}_U(T)(\xi)|^2d\xi<\infty\right\}
\end{equation}
equipped with the norm 

\begin{equation}\label{norm H}
  \|T\|_{\mathfrak{H}^s_\gamma(G,H)}=\left(\int_{\widehat{G}}(1+\gamma(\xi)^2)^s|\mathcal{F}_U(T)(\xi)|^2d\xi\right)^{\frac{1}{2}}.  
\end{equation}
\end{definition}
We see that $\mathfrak{H}^s_\gamma(G,H)$ is realized as a space of operators in contrast to the classical cases where Sobolev spaces are spaces of functions.  

\begin{remark}{\rm
In the sequel, if we write $X\hookrightarrow Y$, it will mean that the Banach space $X$  embeds continuously into the Banach space $Y$, that is,   $X\subset Y$ and there exists a positive constant $C$ such that $\|x\|_Y\leq C\|x\|_X,\,\forall x\in X$.
}\end{remark}
\begin{theorem}
 The Sobolev space $\mathfrak{H}^s_\gamma(G,H)$  is a Hilbert space and  its inner product is given by
 $$\langle S, T\rangle= \int_{\widehat{G}}(1+\gamma(\xi)^2)^s\mathcal{F}_U(S)(\xi)\overline{\mathcal{F}_U(T)(\xi)}d\xi,\, S,T\in  \mathfrak{H}^s_\gamma(G,H).$$
\end{theorem}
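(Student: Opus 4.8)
The plan is to exhibit $\mathfrak{H}^s_\gamma(G,H)$ as an isometric copy of the Hilbert space $L^2(\widehat{G})$, transporting the latter's completeness and inner product back through a weighted Fourier transform. Write $w(\xi) = (1+\gamma(\xi)^2)^s$ and observe that, since $\gamma(\xi) > 0$ and $s > 0$, we have $w(\xi) \geq 1$ for all $\xi$, so that $w^{-1/2} \leq 1$ pointwise. I would then define the linear map
\[
\Psi : \mathfrak{H}^s_\gamma(G,H) \to L^2(\widehat{G}), \qquad \Psi(T) = w^{1/2}\,\mathcal{F}_U(T).
\]
By the very definition of the space and of the norm in \eqref{norm H}, $\Psi(T)$ lies in $L^2(\widehat{G})$ and $\|\Psi(T)\|_{L^2(\widehat{G})} = \|T\|_{\mathfrak{H}^s_\gamma(G,H)}$, so $\Psi$ is a linear isometry.

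First I would check that $\Psi$ is surjective. Given $g \in L^2(\widehat{G})$, set $f = w^{-1/2} g$; since $w^{-1/2} \leq 1$ we get $f \in L^2(\widehat{G})$. By the Plancherel theorem (Theorem \ref{Plancherel}), $\mathcal{F}_U : S_2(H) \to L^2(\widehat{G})$ is unitary, hence there is a unique $T \in S_2(H)$ with $\mathcal{F}_U(T) = f$; this $T$ satisfies $\int_{\widehat{G}} w\,|\mathcal{F}_U(T)|^2 \, d\xi = \|g\|_{L^2(\widehat{G})}^2 < \infty$, so $T \in \mathfrak{H}^s_\gamma(G,H)$ and $\Psi(T) = g$. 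Thus $\Psi$ is a bijective isometry onto the complete space $L^2(\widehat{G})$. A normed space that is isometrically isomorphic to a complete space is itself complete, whence $\mathfrak{H}^s_\gamma(G,H)$ is a Banach space.

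Next I would identify the inner product. For $S, T \in \mathfrak{H}^s_\gamma(G,H)$, the pullback of the standard inner product of $L^2(\widehat{G})$ under $\Psi$ is
\[
\langle \Psi(S), \Psi(T)\rangle_{L^2(\widehat{G})} = \int_{\widehat{G}} w(\xi)\, \mathcal{F}_U(S)(\xi)\,\overline{\mathcal{F}_U(T)(\xi)}\, d\xi,
\]
which is precisely the claimed formula. The integral converges by the Cauchy-Schwarz inequality applied to $w^{1/2}\mathcal{F}_U(S)$ and $w^{1/2}\mathcal{F}_U(T)$ in $L^2(\widehat{G})$. Sesquilinearity and the positivity $\langle T, T\rangle \geq 0$ are immediate from the integral expression; for definiteness, $\langle T, T\rangle = 0$ forces $w^{1/2}\mathcal{F}_U(T) = 0$ almost everywhere, hence $\mathcal{F}_U(T) = 0$ a.e.\ since $w > 0$, and the injectivity of $\mathcal{F}_U$ on $S_2(H)$ granted by Theorem \ref{Plancherel} yields $T = 0$. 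Since this form induces exactly the norm \eqref{norm H} under which completeness was already established, $\mathfrak{H}^s_\gamma(G,H)$ is a Hilbert space.

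The only genuinely delicate point is the surjectivity of $\Psi$: it relies on the inequality $w^{-1/2} \leq 1$, guaranteed by $\gamma > 0$ and $s > 0$, to ensure that dividing an arbitrary $L^2$ function by $w^{1/2}$ does not leave $L^2$, so that every target vector admits a preimage in $S_2(H)$ via the inverse Fourier transform. Everything else reduces to the fact that Plancherel makes $\mathcal{F}_U$ a unitary identification between $S_2(H)$ and $L^2(\widehat{G})$.
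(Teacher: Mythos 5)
Your proof is correct and follows essentially the same route as the paper, which simply observes that $T\mapsto (1+\gamma(\cdot)^2)^{\frac{s}{2}}\mathcal{F}_U(T)$ is an isometric isomorphism onto $L^2(\widehat{G})$ and pulls back the inner product. Your write-up merely fills in the details the paper leaves implicit (surjectivity via $w\geq 1$ and Plancherel, transfer of completeness, and definiteness from the injectivity of $\mathcal{F}_U$), all of which are handled correctly.
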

\begin{proof}
  It is sufficient to observe that the Sobolev space $\mathfrak{H}^s_\gamma(G,H)$ is isometrically isomorphic to the Hilbert space $L^2(\widehat{G})$ via the  mapping $T\mapsto (1+\gamma(\cdot)^2)^{\frac{s}{2}}\mathcal{F}_U(T)$. Moreover, the expression of the inner product is deduced from the norm in $\mathfrak{H}^s_\gamma(G,H)$.
\end{proof}

The following theorem shows that the Sobolev spaces  can be ordered according to "smoothness" parameter.
\begin{theorem}
 If $s_2>s_1>0$,  then  $\mathfrak{H}^{s_2}_\gamma(G,H)\hookrightarrow \mathfrak{H}^{s_1}_\gamma(G,H)$ and
 $$ \|T\|_{\mathfrak{H}^{s_1}_\gamma(G,H)}\leq  \|T\|_{\mathfrak{H}^{s_2}_\gamma(G,H)}, \, \forall\, T\in\mathfrak{H}^{s_2}_\gamma(G,H).$$
\end{theorem}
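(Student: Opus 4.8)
The plan is to reduce the embedding to a pointwise monotonicity inequality on the integrand and then integrate. The essential observation is that, since $\gamma$ takes values in $(0,\infty)$, the weight satisfies $1+\gamma(\xi)^2 \geq 1$ for every $\xi \in \widehat{G}$. Consequently, for each fixed $\xi$ the map $s \mapsto (1+\gamma(\xi)^2)^s$ is nondecreasing, because raising a real number that is at least $1$ to a larger exponent yields a larger value.

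First I would record the elementary scalar inequality: for $s_2 > s_1 > 0$ and any $a \geq 1$ one has $a^{s_1} \leq a^{s_2}$. Applying this with $a = 1+\gamma(\xi)^2$ gives, for every $\xi$,
$$(1+\gamma(\xi)^2)^{s_1}|\mathcal{F}_U(T)(\xi)|^2 \leq (1+\gamma(\xi)^2)^{s_2}|\mathcal{F}_U(T)(\xi)|^2.$$
Next I would integrate this pointwise bound over $\widehat{G}$ with respect to the Haar measure. By the monotonicity of the integral, this yields directly
$$\int_{\widehat{G}}(1+\gamma(\xi)^2)^{s_1}|\mathcal{F}_U(T)(\xi)|^2\,d\xi \leq \int_{\widehat{G}}(1+\gamma(\xi)^2)^{s_2}|\mathcal{F}_U(T)(\xi)|^2\,d\xi,$$
which is precisely $\|T\|_{\mathfrak{H}^{s_1}_\gamma(G,H)}^2 \leq \|T\|_{\mathfrak{H}^{s_2}_\gamma(G,H)}^2$ by the definition of the norm in \eqref{norm H}. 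Taking square roots delivers the stated norm inequality.

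Finally I would extract the inclusion: if $T \in \mathfrak{H}^{s_2}_\gamma(G,H)$, the right-hand integral is finite, hence so is the left-hand one, whence $T \in \mathfrak{H}^{s_1}_\gamma(G,H)$. The norm inequality (with constant $C = 1$) then furnishes the continuity of the embedding. I do not anticipate a genuine obstacle; the only point requiring any care is the justification of the scalar monotonicity $a^{s_1} \leq a^{s_2}$ for $a \geq 1$, which is immediate since $\log a \geq 0$ makes $a^{s} = e^{s\log a}$ nondecreasing in $s$. The positivity hypothesis on $\gamma$ is exactly what guarantees $a \geq 1$ and thus drives the whole argument; were $\gamma$ allowed to vanish or the ``$1+$'' shift absent, the comparison could fail.
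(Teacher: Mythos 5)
Your proof is correct and follows essentially the same route as the paper, which simply invokes the pointwise inequality $(1+\gamma(\xi)^2)^{s_1}\leq(1+\gamma(\xi)^2)^{s_2}$; you have merely spelled out the justification (monotonicity of $a\mapsto a^s$ for $a\geq 1$) and the integration step that the paper leaves implicit. No gap, and the constant $C=1$ matches the stated norm inequality.
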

\begin{proof}
    The result is due to the fact that $(1+\gamma(\xi)^2)^{s_1}\leq (1+\gamma(\xi)^2)^{s_2}$.
\end{proof}
The Sobolev space $\mathfrak{H}^s_\gamma(G,H)$ embeds continuously in the space of Hilbert-Schmidt operators. 
\begin{theorem}
    We have $$\mathfrak{H}^s_\gamma(G,H)\hookrightarrow S_2(H).$$
    Moreover, $\forall\, T\in \mathfrak{H}^s_\gamma(G,H), \|T\|_{S_2(H)}\leq  \|T\|_{\mathfrak{H}^s_\gamma(G,H)}$.
\end{theorem}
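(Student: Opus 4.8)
The plan is to reduce everything to the pointwise inequality $(1+\gamma(\xi)^2)^s\geq 1$ on $\widehat{G}$ together with the Plancherel identity from Theorem \ref{Plancherel}. The set-theoretic inclusion $\mathfrak{H}^s_\gamma(G,H)\subset S_2(H)$ requires no work, since by its very definition $\mathfrak{H}^s_\gamma(G,H)$ is carved out as a subset of $S_2(H)$; so the entire content of the statement is the norm estimate, from which continuity of the embedding follows automatically.

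First I would record the elementary observation that, because $\gamma$ takes values in $(0,\infty)$, one has $\gamma(\xi)^2\geq 0$ and hence $1+\gamma(\xi)^2\geq 1$ for every $\xi\in\widehat{G}$; raising this to the positive power $s$ preserves the inequality, giving $(1+\gamma(\xi)^2)^s\geq 1$ pointwise. Multiplying by the nonnegative quantity $|\mathcal{F}_U(T)(\xi)|^2$ and integrating over $\widehat{G}$ with respect to the Haar measure then yields
$$
\int_{\widehat{G}}|\mathcal{F}_U(T)(\xi)|^2\,d\xi\;\leq\;\int_{\widehat{G}}(1+\gamma(\xi)^2)^s|\mathcal{F}_U(T)(\xi)|^2\,d\xi\;=\;\|T\|_{\mathfrak{H}^s_\gamma(G,H)}^2 .
$$

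Next I would interpret the left-hand side via Plancherel. Since $\mathcal{F}_U:S_2(H)\to L^2(\widehat{G})$ is unitary (Theorem \ref{Plancherel}), it is in particular isometric, so
$$
\|T\|_{S_2(H)}^2\;=\;\|\mathcal{F}_U(T)\|_{L^2(\widehat{G})}^2\;=\;\int_{\widehat{G}}|\mathcal{F}_U(T)(\xi)|^2\,d\xi .
$$
Combining the two displays gives $\|T\|_{S_2(H)}^2\leq\|T\|_{\mathfrak{H}^s_\gamma(G,H)}^2$, and taking square roots produces the announced inequality $\|T\|_{S_2(H)}\leq\|T\|_{\mathfrak{H}^s_\gamma(G,H)}$ with embedding constant $C=1$.

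This argument is essentially immediate, so I do not anticipate a genuine obstacle; the only point that deserves care is bookkeeping of hypotheses. The use of the Plancherel isometry presupposes that the projective representation is integrable, which is the standing assumption under which Theorem \ref{Plancherel} holds, so I would make sure that hypothesis is in force (or stated) for this theorem. Beyond that, the integrand $(1+\gamma(\xi)^2)^s|\mathcal{F}_U(T)(\xi)|^2$ is measurable because $\gamma$ is measurable and $\mathcal{F}_U(T)\in L^2(\widehat{G})$, so the manipulations above are justified and nothing more delicate than monotonicity of the integral is needed.
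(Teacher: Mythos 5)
Your proof is correct and follows essentially the same route as the paper: the pointwise bound $(1+\gamma(\xi)^2)^s\geq 1$ combined with the Plancherel identity $\|T\|_{S_2(H)}=\|\mathcal{F}_U(T)\|_{L^2(\widehat{G})}$ from Theorem \ref{Plancherel}. Your additional remark that the integrability of the representation is the standing hypothesis behind Plancherel is a worthwhile bookkeeping point that the paper leaves implicit.
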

\begin{proof}
 By Theorem \ref{Plancherel}, we have $\|T\|_{S_2(H)}=\|\mathcal{F}_U(T)\|_{ L^2(\widehat{G})}$. Therefore,
 \begin{align*}
   \|T\|_{S_2(H)}&=  \left(\int_{\widehat{G}}|\mathcal{F}_U(T)(\xi)|^2d\xi\right)^{\frac{1}{2}}\\
   &\leq \left(\int_{\widehat{G}}(1+\gamma(\xi)^2)^s|\mathcal{F}_U(T)(\xi)|^2d\xi\right)^{\frac{1}{2}}=\|T\|_{\mathfrak{H}^s_\gamma(G,H)}.
 \end{align*}
\end{proof}
The following theorem gives a sufficient condition for the continuous embedding of the Sobolev space $\mathfrak{H}^s_\gamma(G,H)$ into $\mathcal{B}(H)$, the space of bounded operators on $H$.

\begin{theorem}
 If  $(1+\gamma(\cdot)^2)^{-\frac{s}{2}}\in L^2(\widehat{G})$, then $\mathfrak{H}^s_\gamma(G,H)\hookrightarrow\mathcal{B}(H)$ and $\forall\, T\in \mathfrak{H}^s_\gamma(G,H)$, 
 $$\|T\|_{op}\leq \|(1+\gamma(\cdot)^2)^{-\frac{s}{2}}\|_{L^2(\widehat{G})}\|T\|_{\mathfrak{H}^s_\gamma(G,H)}.$$
\end{theorem}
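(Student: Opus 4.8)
The plan is to show that the integrability hypothesis on $(1+\gamma(\cdot)^2)^{-s/2}$ forces the scalar function $\mathcal{F}_U(T)$ to be integrable on $\widehat{G}$, and then to convert this $L^1$-control of the Fourier transform into an operator-norm bound on $T$ itself.

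First I would establish the $L^1$-estimate on $\mathcal{F}_U(T)$. For $T \in \mathfrak{H}^s_\gamma(G,H)$, I factor the integrand as
$$|\mathcal{F}_U(T)(\xi)| = (1+\gamma(\xi)^2)^{-\frac{s}{2}}(1+\gamma(\xi)^2)^{\frac{s}{2}}|\mathcal{F}_U(T)(\xi)|$$
and apply the Cauchy-Schwarz inequality in $L^2(\widehat{G})$. By hypothesis the first factor lies in $L^2(\widehat{G})$, while the $L^2$-norm of the second factor is exactly $\|T\|_{\mathfrak{H}^s_\gamma(G,H)}$ by the definition \eqref{norm H} of the Sobolev norm. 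This yields
$$\|\mathcal{F}_U(T)\|_{L^1(\widehat{G})} \leq \|(1+\gamma(\cdot)^2)^{-\frac{s}{2}}\|_{L^2(\widehat{G})}\,\|T\|_{\mathfrak{H}^s_\gamma(G,H)},$$
so in particular $\mathcal{F}_U(T) \in L^1(\widehat{G})$.

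Next I would pass from the $L^1$-bound on $\mathcal{F}_U(T)$ to the operator-norm bound on $T$. The most economical route uses Theorem \ref{Hausdorff-Young reverse} at the endpoint $p=1$, $q=\infty$: since $S_\infty(H)=\mathcal{B}(H)$ by convention, it reads $\|\mathcal{F}_U^{-1}(f)\|_{op}\leq\|f\|_{L^1(\widehat{G})}$. Applied to $f=\mathcal{F}_U(T)$, together with the identity $\mathcal{F}_U^{-1}(\mathcal{F}_U(T))=T$ furnished by Plancherel (Theorem \ref{Plancherel}, where $\mathcal{F}_U$ is unitary on $S_2(H)$ with inverse $\mathcal{F}_U^{-1}$), this gives $\|T\|_{op}\leq\|\mathcal{F}_U(T)\|_{L^1(\widehat{G})}$. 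Combined with the first step, this is exactly the claimed inequality. Alternatively, and more concretely, once $\mathcal{F}_U(T)\in L^1(\widehat{G})$ one may invoke the reconstruction formula $T=\int_{\widehat{G}}\mathcal{F}_U(T)(\xi)U_\xi\,d\xi$ and estimate, using that each $U_\xi$ is unitary (so $\|U_\xi\|_{op}=1$), that $\|T\|_{op}\leq\int_{\widehat{G}}|\mathcal{F}_U(T)(\xi)|\,\|U_\xi\|_{op}\,d\xi=\|\mathcal{F}_U(T)\|_{L^1(\widehat{G})}$.

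The one point that warrants care---and which I expect to be the only real obstacle---is the passage from the Hilbert-Schmidt setting to $\mathcal{B}(H)$: a priori $T$ is only Hilbert-Schmidt, whereas the reconstruction formula was stated for trace-class operators. The $L^1$-integrability of $\mathcal{F}_U(T)$ secured in the first step is precisely what resolves this, since it makes the Bochner integral above converge in operator norm and, by Plancherel, represent $T$; invoking the already-stated inverse Hausdorff-Young inequality instead bypasses the issue altogether and keeps the argument entirely within the results quoted above.
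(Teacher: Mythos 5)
Your proof is correct, and its computational core coincides with the paper's: both hinge on factoring $|\mathcal{F}_U(T)(\xi)|=(1+\gamma(\xi)^2)^{-\frac{s}{2}}\cdot(1+\gamma(\xi)^2)^{\frac{s}{2}}|\mathcal{F}_U(T)(\xi)|$ and applying Cauchy--Schwarz to obtain $\|\mathcal{F}_U(T)\|_{L^1(\widehat{G})}\leq \|(1+\gamma(\cdot)^2)^{-\frac{s}{2}}\|_{L^2(\widehat{G})}\|T\|_{\mathfrak{H}^s_\gamma(G,H)}$. Where you diverge is in converting this $L^1$ control into $\|T\|_{op}\leq\|\mathcal{F}_U(T)\|_{L^1(\widehat{G})}$. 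The paper applies the reconstruction formula $T\eta=\int_{\widehat{G}}\mathcal{F}_U(T)(\xi)U_\xi\eta\,d\xi$ to a unit vector $\eta$, estimates inside the integral using the unitarity of $U_\xi$, and only then applies Cauchy--Schwarz --- this is precisely your ``alternative'' route, with the two steps in the opposite order. Your primary route instead invokes the endpoint $p=1$, $q=\infty$ of Theorem \ref{Hausdorff-Young reverse}, together with the convention $S_\infty(H)=\mathcal{B}(H)$ and the Plancherel identity $\mathcal{F}_U^{-1}(\mathcal{F}_U(T))=T$ on $S_2(H)$. This is a genuine, if small, gain in rigor: the reconstruction formula is stated in the paper only for $T\in S_1(H)$ with $\mathcal{F}_U(T)\in L^1(\widehat{G})$, whereas here $T$ is a priori only Hilbert--Schmidt, so the paper's proof invokes that formula outside its stated hypotheses --- exactly the subtlety you flag at the end (and which, as you note, the $L^1$ bound plus a density/Plancherel argument would repair). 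Your endpoint Hausdorff--Young argument stays entirely within the results quoted in the preliminaries and sidesteps the issue; what the paper's route buys in exchange is a self-contained elementary estimate that makes the role of the unitaries $U_\xi$ explicit.
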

 \begin{proof}
Let $T\in \mathfrak{H}^s_\gamma(G,H)$. In order to apply the Fourier inversion formula, we will first show that $\mathcal{F}_U(T)$ is integrable. 
\begin{align*}
\int_{\widehat{G}}|\mathcal{F}_U(T)(\xi)|d\xi&=\int_{\widehat{G}}(1+\gamma (\xi)^2)^{\frac{s}{2}}|\mathcal{F}_U(T)(\xi)|(1+\gamma (\xi)^2)^{-\frac{s}{2}}d\xi\\
&\leq \left(\int_{\widehat{G}}(1+\gamma (\xi)^2)^s|\mathcal{F}_U(T)(\xi)|^2d\xi\right)^{\frac{1}{2}}\left(\int_{\widehat{G}}(1+\gamma (\xi)^2)^{-s}d\xi\right)^{\frac{1}{2}}\\
& \mbox{(by the Cauchy-Schwarz inequality.)}\\
&\leq \|T\|_{\mathfrak{H}^s_\gamma(G,H)}\|1+\gamma (\cdot)^2)^{-\frac{s}{2}}\|_{L^2(\widehat{G})}<\infty.
\end{align*}
Thus,  $\mathcal{F}_U(T)$ is integrable.

Now, let $\eta$ be a unit vector in $H$. Using the Fourier inversion formula, we have 
 \begin{align*}
     \|T\eta\|_H&=\left\|\int_{\widehat{G}}\mathcal{F}_U(T)(\xi)U_\xi\eta d\xi\right\|_H\\
     &\leq \int_{\widehat{G}}\left\|(\mathcal{F}_U(T)(\xi)U_\xi\eta \right\|_H d\xi\\
     &\leq \int_{\widehat{G}}|\mathcal{F}_U(T)(\xi)|\left\|U_\xi \right\|_{op}  \|\eta\|_H d\xi  \\
     &=\int_{\widehat{G}}|\mathcal{F}_U(T)(\xi)|d\xi\\
& \mbox{(because $U_\xi$ is unitary and $\eta$ is a unit vector.)}\\
&\leq \|T\|_{\mathfrak{H}^s_\gamma(G,H)}\|1+\gamma (\cdot)^2)^{-\frac{s}{2}}\|_{L^2(\widehat{G})}\\
 \end{align*}
 Therefore, $\|T\|_{op}:=\sup\limits_{\|\eta\|=1}\|T\eta\|\leq \|T\|_{\mathfrak{H}^s_\gamma(G,H)}\|(1+\gamma(\cdot)^2)^{-\frac{s}{2}}\|_{L^2(\widehat{G})}.$
\end{proof}
The following theorem provides a sufficient condition for the continuous embedding of the Sobolev space $\mathfrak{H}^s_\gamma(G,H)$ into some Schatten classes of operators. 
\begin{theorem}
Let $\alpha>s$. Set $\alpha^*=\frac{2\alpha}{\alpha-s}$. If $\dfrac{1}{1+\gamma(\cdot)^2}\in L^\alpha (\widehat{G})$, then $\mathfrak{H}^s_\gamma(G,H)\hookrightarrow S_{\alpha^*}(H)$. Moreover, 
$$\|T\|_{S_{\alpha^*}(H)}\leq \|T\|_{\mathfrak{H}^s_\gamma(G,H)}\left\|\dfrac{1}{1+\gamma(\cdot)^2}\right\|_{L^\alpha (\widehat{G})}^{\frac{s}{2}}, \forall T\in \mathfrak{H}^s_\gamma(G,H).$$
\end{theorem}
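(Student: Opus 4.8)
The plan is to realize $T$ as the inverse quantum Fourier transform of its own transform and then invoke the reverse Hausdorff--Young inequality (Theorem \ref{Hausdorff-Young reverse}), thereby reducing the Schatten estimate to a weighted $L^p$ estimate on $\widehat{G}$. First I would set $f=\mathcal{F}_U(T)\in L^2(\widehat{G})$, so that by Theorem \ref{Plancherel} one has $T=\mathcal{F}_U^{-1}(f)$. The exponent $\alpha^\ast=\frac{2\alpha}{\alpha-s}$ satisfies $\alpha^\ast>2$ (since $0<\alpha-s<\alpha$), so its conjugate exponent $p$, determined by $\frac1p+\frac1{\alpha^\ast}=1$, equals $p=\frac{2\alpha}{\alpha+s}$; a short check shows $1<p<2$, which is exactly the range in which Theorem \ref{Hausdorff-Young reverse} applies. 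This yields $\|T\|_{S_{\alpha^\ast}(H)}=\|\mathcal{F}_U^{-1}(f)\|_{S_{\alpha^\ast}(H)}\le\|f\|_{L^p(\widehat{G})}$.

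The next step is to estimate $\|f\|_{L^p(\widehat{G})}$ by inserting the weight. I would write
$$|f(\xi)|^p=\bigl[(1+\gamma(\xi)^2)^s|f(\xi)|^2\bigr]^{p/2}\,(1+\gamma(\xi)^2)^{-sp/2}$$
and apply H\"older's inequality to this product with exponents $r=\frac2p$ and its conjugate $r'=\frac2{2-p}$. The first factor then integrates, after the $1/r$-power, to $\|T\|_{\mathfrak{H}^s_\gamma(G,H)}^{p}$ by the very definition of the Sobolev norm \eqref{norm H}, while the second produces $\int_{\widehat{G}}(1+\gamma(\xi)^2)^{-s p r'/2}\,d\xi$ raised to the power $1/r'$.

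The crux is a compatibility check: the same $p$ forced by the Hausdorff--Young step must also make the second integral equal to $\bigl\|\frac{1}{1+\gamma(\cdot)^2}\bigr\|_{L^\alpha(\widehat{G})}^\alpha$, that is, one needs $\frac{s p r'}{2}=\alpha$. Substituting $2-p=\frac{2s}{\alpha+s}$ gives $\frac{s p}{2-p}=\alpha$, so the identity holds exactly, which is precisely why the hypothesis $\frac{1}{1+\gamma(\cdot)^2}\in L^\alpha(\widehat{G})$ is the natural assumption. Finally I would take $p$-th roots and collect exponents: the power attached to the $L^\alpha$-norm comes out to $\frac{\alpha(2-p)}{2p}$, and plugging in $p=\frac{2\alpha}{\alpha+s}$ simplifies this to $\frac s2$, giving the stated inequality (and in particular the inclusion $\mathfrak{H}^s_\gamma(G,H)\subset S_{\alpha^\ast}(H)$ together with continuity). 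The main obstacle is entirely this exponent bookkeeping---confirming that one single choice of $p$ simultaneously lands in $[1,2]$ for Theorem \ref{Hausdorff-Young reverse}, converts the first H\"older factor into the Sobolev norm, and converts the second into the prescribed $L^\alpha$ weight---rather than any deeper analytic difficulty.
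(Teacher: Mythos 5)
Your proof is correct and is essentially identical to the paper's: the same choice of conjugate exponent $p=\frac{2\alpha}{\alpha+s}$, the same application of the reverse Hausdorff--Young inequality (Theorem \ref{Hausdorff-Young reverse}) to get $\|T\|_{S_{\alpha^*}(H)}\leq\|\mathcal{F}_U(T)\|_{L^p(\widehat{G})}$, and the same H\"older step with exponents $\frac{2}{p}$ and $\frac{2}{2-p}$, including the key verifications $\frac{sp}{2-p}=\alpha$ and $\frac{\alpha(2-p)}{2p}=\frac{s}{2}$. No changes needed.
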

\begin{proof}
Let $T\in \mathfrak{H}^s_\gamma(G,H)$.  Let $p=\dfrac{2\alpha}{\alpha+s}$. Then $1<p<2$ and $\dfrac{1}{p}+\dfrac{1}{\alpha^*}=1$. Then, by the Hausdorff-Young inequality for $\mathcal{F}_U^{-1}$ (Theorem \ref{Hausdorff-Young reverse}), we get 
$$\|T\|_{S_{\alpha^*}(H)}\leq \|\mathcal{F}_U(T)\|_{L^p(\widehat{G})}.$$
Moreover, 
\begin{align*}
\|\mathcal{F}_U(T)\|_{L^p(\widehat{G})}^p&=\int_{\widehat{G}}|\mathcal{F}_U(T)(\xi)|^p d\xi\\
    &= \int_{\widehat{G}}|\mathcal{F}_U(T)(\xi)|^p \dfrac{(1+\gamma(\xi)^2)^{\frac{sp}{2}}}{(1+\gamma(\xi)^2)^{\frac{sp}{2}}}d\xi\\
    &=\int_{\widehat{G}}(1+\gamma(\xi)^2)^{\frac{sp}{2}}|\mathcal{F}_U(T)(\xi)|^p \dfrac{1}{(1+\gamma(\xi)^2)^{\frac{sp}{2}}}d\xi.
\end{align*}
Observing that $\dfrac{p}{2}+\dfrac{2-p}{2}=1$, we apply the H\"older inequality.  We obtain
\begin{align*}
\|\mathcal{F}_U(T)\|_{L^p(\widehat{G})}^p & \leq \left(\int_{\widehat{G}}(1+\gamma(\xi)^2)^{s}|\mathcal{F}_U(T)(\xi)|^{2} \right)^{\frac{p}{2}}\left( \int_{\widehat{G}}\dfrac{d\xi}{(1+\gamma(\xi)^2)^{\frac{sp}{2-p}}}\right)^{\frac{2-p}{2}}.
\end{align*}
This leads to 
\begin{align*}
\|\mathcal{F}_U(T)\|_{L^p(\widehat{G})} & \leq \left(\int_{\widehat{G}}(1+\gamma(\xi)^2)^{s}|\mathcal{F}_U(T)(\xi)|^{2} \right)^{\frac{1}{2}}\left( \int_{\widehat{G}}\dfrac{d\xi}{(1+\gamma(\xi)^2)^\alpha}\right)^{\frac{2-p}{2p}}\\
&=\|T\|_{\mathfrak{H}^s_\gamma(G,H)}\left\|\dfrac{1}{(1+\gamma(\cdot)^2)}\right\|_{L^\alpha (\widehat{G})}^{\frac{s}{2}}.
\end{align*}
Finally $$\|T\|_{S_{\alpha^*}(H)}\leq \|T\|_{\mathfrak{H}^s_\gamma(G,H)}\left\|\dfrac{1}{1+\gamma(\cdot)^2}\right\|_{L^\alpha (\widehat{G})}^{\frac{s}{2}}.$$
\end{proof}

\section{Applications}\label{Applications}
In this section, we focus on applications. More precisely, we solve two partial differential equations where the unknowns are in the Hilbert-Schmidt class. Throughout this section, we will call {\it transformer} any application whose input and output are operators.
\subsection{The quantum Poisson-type equation}\label{Poisson}
We consider the quantum Poisson-type equation. 
\begin{equation}\label{Poisson-type equation}
    (\mathscr{I}-\Delta )T=S
\end{equation}
where $\Delta$ is the Laplacian, $\mathscr{I}$ is the identity transformer, $S$ is a given operator on a Hilbert space $H$ and $T$ is the unknown. The transformer associated with Equation (\ref{Poisson-type equation}) is $\mathscr{L}_0=\mathscr{I}-\Delta$ where so that $\mathscr{L}_0 T=S.$
Our goal is to make sense of this equation by expressing  the transformer $\mathscr{L}_0$ in Fourier space by the quantum Fourier transformation $\mathcal{F}_U$.
Let us start with the following heuristic approach.
\subsubsection{Heuristic approach}
The Fourier transform of a Schwartz function $f$ on $\mathbb{R}$ is given by 
$$[\mathscr{F}f](\nu)=\int_{\mathbb{R}}f(x)e^{-2i\pi\nu x}dx.$$
The one-dimensional Laplacian is $\displaystyle\frac{d^2}{dx^2}$ and  we know that:   $$\left[\mathscr{F}\left(\frac{d^2f}{dx^2}\right)\right](\nu)=-(2\pi\nu)^2[\mathscr{F}f](\nu).$$
We obtain $$\frac{d^2f}{dx^2}=-\mathscr{F}^{-1}[(2\pi\nu)^2\mathscr{F}f].$$
Therefore, 
\begin{align*}
 f- \frac{d^2f}{dx^2}= \mathscr{F}^{-1}\mathscr{F}f+ \mathscr{F}^{-1}[(2\pi\nu)^2\mathscr{F}f]\\
  =\mathscr{F}^{-1}[(1+(2\pi\nu)^2)\mathscr{F}f].
\end{align*}
 Thus, 
\begin{equation}\label{Heuristic1}
\left(I-\frac{d^2}{dx^2}\right)f=\mathscr{F}^{-1}[(1+(2\pi\nu)^2)\mathscr{F}f],  
\end{equation}
where $I$ is the identity operator. 

\subsubsection{Quantum setting}
Following the equality (\ref{Heuristic1}), we consider that in the quantum setting the transformer $\mathscr{L}_0$ is expressed as follows:
\begin{equation}
   \mathscr{L}_0 T=\mathcal{F}_U^{-1}\left[\left(1+\gamma(\xi)^2\right)\mathcal{F}_U(T)\right]. 
\end{equation}
Therefore, the domain of the transformer $\mathscr{L}_0$ is  
$$D(\mathscr{L}_0)=\mathfrak{H}_\gamma^2(G,H)=\left\{ T\in S_2(H) : \int_{\widehat{G}}(1+\gamma(\xi)^2)^2|\mathcal{F}_U(T)(\xi)|^2d\xi<\infty\right\}.$$
Let us recall from \ref{norm H} that the norm on $\mathfrak{H}_\gamma^2(G,H)$ is given by
$$
 \|T\|_{\mathfrak{H}^2_\gamma(G,H)}=\left(\int_{\widehat{G}}(1+\gamma(\xi)^2)^2|\mathcal{F}_U(T)(\xi)|^2d\xi\right)^{\frac{1}{2}}. $$ 
\begin{theorem}
If $S\in S_2(H)$, then the quantum Poisson-type equation $\mathscr{L}_0 T=S$ has a unique solution $T_0$ in $D(\mathscr{L}_0)$. Moreover, 
$$\|T_0\|_{\mathfrak{H}_\gamma^2(G,H)}=\|S\|_{S_2(H)}.$$
\end{theorem}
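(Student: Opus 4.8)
The plan is to diagonalise the transformer $\mathscr{L}_0$ by means of the quantum Fourier transform and thereby reduce the operator equation to a pointwise scalar identity on $\widehat{G}$. By Theorem \ref{Plancherel}, $\mathcal{F}_U$ is a unitary isomorphism from $S_2(H)$ onto $L^2(\widehat{G})$, so I would apply $\mathcal{F}_U$ to $\mathscr{L}_0 T = S$ and use the defining expression $\mathscr{L}_0 T = \mathcal{F}_U^{-1}[(1+\gamma(\cdot)^2)\mathcal{F}_U(T)]$ to recast the equation as the multiplication equation $(1+\gamma(\xi)^2)\mathcal{F}_U(T)(\xi) = \mathcal{F}_U(S)(\xi)$ for almost every $\xi \in \widehat{G}$.

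For existence, I would set $g(\xi) = \mathcal{F}_U(S)(\xi)/(1+\gamma(\xi)^2)$ and let $T_0 = \mathcal{F}_U^{-1}(g)$. Since $1+\gamma(\xi)^2 \geq 1$, the pointwise bound $|g(\xi)| \leq |\mathcal{F}_U(S)(\xi)|$ together with $\mathcal{F}_U(S) \in L^2(\widehat{G})$ (guaranteed by $S \in S_2(H)$) shows that $g \in L^2(\widehat{G})$; hence $T_0$ is a well-defined element of $S_2(H)$ satisfying $\mathscr{L}_0 T_0 = S$ by construction. To verify that $T_0$ lies in the domain $D(\mathscr{L}_0) = \mathfrak{H}^2_\gamma(G,H)$ and to obtain the norm identity at the same stroke, I would compute
\begin{equation*}
\int_{\widehat{G}}(1+\gamma(\xi)^2)^2|\mathcal{F}_U(T_0)(\xi)|^2\, d\xi = \int_{\widehat{G}}(1+\gamma(\xi)^2)^2\frac{|\mathcal{F}_U(S)(\xi)|^2}{(1+\gamma(\xi)^2)^2}\, d\xi = \int_{\widehat{G}}|\mathcal{F}_U(S)(\xi)|^2\, d\xi,
\end{equation*}
which is finite and equals $\|S\|_{S_2(H)}^2$ by Plancherel; taking square roots yields $\|T_0\|_{\mathfrak{H}^2_\gamma(G,H)} = \|S\|_{S_2(H)}$.

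For uniqueness, I would note that if $T_1, T_2 \in D(\mathscr{L}_0)$ both solve the equation then $\mathscr{L}_0(T_1 - T_2) = 0$, so $(1+\gamma(\xi)^2)\mathcal{F}_U(T_1 - T_2)(\xi) = 0$ almost everywhere; since the weight $1+\gamma(\xi)^2$ never vanishes, this forces $\mathcal{F}_U(T_1 - T_2) = 0$ in $L^2(\widehat{G})$, and the injectivity of the unitary $\mathcal{F}_U$ then gives $T_1 = T_2$.

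The computations are elementary once the reduction through Plancherel is in place, so I do not anticipate a genuine obstacle; the only point demanding a little care is the intertwining step, namely checking that for $T \in D(\mathscr{L}_0)$ the function $(1+\gamma(\cdot)^2)\mathcal{F}_U(T)$ indeed belongs to $L^2(\widehat{G})$, so that $\mathscr{L}_0 T$ is a bona fide Hilbert-Schmidt operator and the identity $\mathcal{F}_U(\mathscr{L}_0 T) = (1+\gamma(\cdot)^2)\mathcal{F}_U(T)$ is legitimate. This, however, is precisely the defining membership condition of the domain $\mathfrak{H}^2_\gamma(G,H)$, so it requires no separate argument.
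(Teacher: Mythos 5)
Your proposal is correct and follows essentially the same route as the paper: apply $\mathcal{F}_U$ to turn $\mathscr{L}_0 T = S$ into the multiplication equation $(1+\gamma(\xi)^2)\mathcal{F}_U(T)(\xi)=\mathcal{F}_U(S)(\xi)$, divide by the never-vanishing weight, and invert, with the norm identity following from Plancherel. The only difference is one of presentation: the paper compresses existence and uniqueness into a single chain of equivalences, whereas you spell out separately the $L^2$-membership of $\mathcal{F}_U(S)/(1+\gamma(\cdot)^2)$, the domain membership of $T_0$, and the injectivity argument for uniqueness.
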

\begin{proof}
Let $T\in D(\mathscr{L}_0)$.
 \begin{align*}
  \mathscr{L}_0 T=S &  \Longleftrightarrow   \mathcal{F}_U^{-1}[(1+\gamma(\xi)^2)\mathcal{F}_U(T)]=S\\
  & \Longleftrightarrow T=\mathcal{F}_U^{-1}\left(\displaystyle\frac{\mathcal{F}_U(S)}{1+\gamma(\xi)^2}\right).
  \end{align*}  
  Thus, the unique solution is $T_0=\mathcal{F}_U^{-1}\left(\displaystyle\frac{\mathcal{F}_U(S)}{1+\gamma(\xi)^2}\right)$. 
  
  Moreover,
  \begin{align*}
   \|T_0\|_{\mathfrak{H}_\gamma^2(G,H)}&= \left(\int_{\widehat{G}}(1+\gamma(\xi)^2)^2|\mathcal{F}_U(T_0)(\xi)|^2d\xi\right)^{\frac{1}{2}}\\
   &=\left(\int_{\widehat{G}}|\mathcal{F}_U(S)(\xi)|^2d\xi\right)^{\frac{1}{2}}\\
   &=\|\mathcal{F}_U(S)\|_{L^2(\widehat{G})}\\
   &=\|S\|_{S_2(H)}\\
   &(\mbox{by the Plancherel theorem}).
  \end{align*}
  
\end{proof}

\subsection{Quantum generalized bosonic string equation}
With the same notation as in Subsection \ref{Poisson}, we consider the quantum generalized bosonic string equation :
\begin{equation}
    (\Delta e^{c\Delta}-\mathscr{I} )T=S. 
\end{equation}
where $c$ is a positive constant. 
The associated transformer is expressed as
$$\mathscr{L}_c=\Delta e^{c\Delta}-\mathscr{I}.$$
Note that $\mathscr{L}_0$ is obtained from $\mathscr{L}_c$ by allowing $c$ to take the value 0, this justifies their notation.
\subsubsection{Heuristic approach}
For the heuristic approach (see \cite{Gorka2}), we consider the function $u(x)=xe^{-cx}$ and we write the formal Taylor series of $u$ : 
$$u(x)=\sum_{k=0}^\infty\frac{u^{k}(0)}{k!}x^k.$$
Then, replacing $x$ by the one dimensional Laplacian $\displaystyle\frac{d^2}{dx^2}$, we get 
\begin{align*}
    u\left(\frac{d^2}{dx^2}\right)&=\sum_{k=0}^\infty\frac{u^{k}(0)}{k!}\left(\frac{d^2}{dx^2}\right)^k\\
    \Rightarrow u\left(\frac{d^2}{dx^2}\right)f&=\sum_{k=0}^\infty\frac{u^{k}(0)}{k!}\left(\frac{d^2}{dx^2}\right)^kf\\
  \Rightarrow   \mathscr{F}\left( u\left(\frac{d^2}{dx^2}\right)f\right)&=\sum_{k=0}^\infty\frac{u^{k}(0)}{k!}\mathscr{F}\left(\left(\frac{d^2}{dx^2}\right)^kf\right)\\
    &=\sum_{k=0}^\infty\frac{u^{k}(0)}{k!}(2i\pi\nu)^{2k}\mathscr{F}f\\
    &=u(-(2\pi\nu)^2)\mathscr{F}f.
\end{align*}
Therefore, $$ u\left(\frac{d^2}{dx^2}\right)f=\mathscr{F}^{-1}\left(u\left(-(2\pi\nu)^2\right)\mathscr{F}f\right).$$

\begin{align*}
 \Rightarrow   u\left(\frac{d^2}{dx^2}\right)f-f&= \mathscr{F}^{-1}(u(-(2\pi\nu)^2)\mathscr{F}f)- \mathscr{F}^{-1}\mathscr{F}f.
\end{align*}
Finally, 
\begin{equation}\label{Bosonic heuristic}
  \left (\frac{d^2}{dx^2}e^{-c\frac{d^2}{dx^2}}-I\right)f=-\mathscr{F}^{-1}[((2\pi\nu)^2e^{c(2\pi\nu)^2}+1)\mathscr{F}f]. 
\end{equation}
\subsubsection{Quantum setting}
Returning to our framework and  mimicking (\ref{Bosonic heuristic}), we define the transformer $\mathscr{L}_c$ as follows :
\begin{equation}
    \mathscr{L}_cT=-\mathcal{F}_U^{-1}\left[\left(1+\gamma(\xi)^2e^{c\gamma(\xi)^2}\right)\mathcal{F}_U(T)\right],
\end{equation}
and the domain of $\mathscr{L}_c$ is  

\begin{equation}
 D(\mathscr{L}_c)=\mathfrak{H}^{\infty}_{c,\gamma}(G,H) :=\left\{T\in S_2(H) :  \int_{\widehat{G}}(1+\gamma(\xi)^2e^{c\gamma(\xi)^2})^2|\mathcal{F}_U(T)(\xi)|^2d\xi<\infty  \right\} . 
\end{equation}
We provide the space $\mathfrak{H}^{\infty}_{c,\gamma}(G,H)$ with the following norm :
\begin{equation}
    \|T\|_{\mathfrak{H}^{\infty}_{c,\gamma}(G,H)}=\left(   \int_{\widehat{G}}(1+\gamma(\xi)^2e^{c\gamma(\xi)^2})^2|\mathcal{F}_U(T)(\xi)|^2d\xi \right)^{\frac{1}{2}}.
\end{equation}

\begin{theorem}
 If $S\in S_2(H)$, then  the  quantum generalized bosonic string equation  $\mathscr{L}_cT=S$ has a unique solution $T_0$ and 
 $$\|T_0\|_{\mathfrak{H}^{\infty}_{c,\gamma}(G,H)}=\|S\|_{S_2(H)}.$$
\end{theorem}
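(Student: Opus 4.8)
The plan is to follow exactly the strategy used for the Poisson-type transformer $\mathscr{L}_0$, since the only structural change is the replacement of the spectral multiplier $1+\gamma(\xi)^2$ by the strictly positive multiplier $m_c(\xi):=1+\gamma(\xi)^2 e^{c\gamma(\xi)^2}$. First I would transfer the equation to the Fourier side: applying $\mathcal{F}_U$ to both sides of $\mathscr{L}_c T = S$ and using that, by Plancherel's theorem (Theorem \ref{Plancherel}), $\mathcal{F}_U$ is a unitary bijection of $S_2(H)$ onto $L^2(\widehat{G})$ with inverse $\mathcal{F}_U^{-1}$, the operator equation collapses to the pointwise scalar identity $-m_c(\xi)\,\mathcal{F}_U(T)(\xi) = \mathcal{F}_U(S)(\xi)$ on $\widehat{G}$.

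The crucial observation I would exploit is that $\gamma$ takes values in $(0,\infty)$ and $c>0$, so that $m_c(\xi)\geq 1$ everywhere; in particular $m_c$ never vanishes, division by it is legitimate, and the candidate
$$T_0 = -\mathcal{F}_U^{-1}\left(\frac{\mathcal{F}_U(S)}{m_c}\right)$$
is forced as the only possible solution, which gives uniqueness at once. To confirm that $T_0$ is a genuine solution lying in $D(\mathscr{L}_c)=\mathfrak{H}^{\infty}_{c,\gamma}(G,H)$, I would note that $m_c \geq 1$ yields $|\mathcal{F}_U(T_0)(\xi)| \leq |\mathcal{F}_U(S)(\xi)|$, so $\mathcal{F}_U(T_0)\in L^2(\widehat{G})$ and hence $T_0\in S_2(H)$ by Plancherel, after which $\mathscr{L}_c T_0 = S$ follows by substituting back.

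Finally, the norm identity is a direct computation in which the two factors of $m_c$ cancel exactly:
$$\|T_0\|_{\mathfrak{H}^{\infty}_{c,\gamma}(G,H)}^2 = \int_{\widehat{G}} m_c(\xi)^2\,\frac{|\mathcal{F}_U(S)(\xi)|^2}{m_c(\xi)^2}\, d\xi = \int_{\widehat{G}}|\mathcal{F}_U(S)(\xi)|^2\, d\xi = \|S\|_{S_2(H)}^2,$$
the last equality again being Plancherel's theorem. I do not anticipate any real obstacle here; the argument is structurally identical to the $\mathscr{L}_0$ case, and the only point requiring any care is the uniform positivity $m_c\geq 1$, which simultaneously guarantees that the inversion is well-defined, that $T_0$ genuinely lands in $S_2(H)$, and that the multiplier cancels perfectly in the norm.
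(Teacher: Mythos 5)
Your proposal is correct and follows essentially the same route as the paper's own proof: invert the multiplier $1+\gamma(\xi)^2e^{c\gamma(\xi)^2}$ on the Fourier side to get $T_0=-\mathcal{F}_U^{-1}\bigl(\mathcal{F}_U(S)/(1+\gamma(\xi)^2e^{c\gamma(\xi)^2})\bigr)$, then cancel the multiplier in the Sobolev norm and conclude with Plancherel. The only difference is that you spell out the details the paper compresses into ``it is clear'' (positivity of the multiplier, uniqueness, and membership of $T_0$ in the domain), which is a welcome but not substantively different elaboration.
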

\begin{proof}
  It is clear that $T_0=-\mathcal{F}_U^{-1}\left(\displaystyle\frac{\mathcal{F}_U(S)}{1+\gamma(\xi)^2e^{c\gamma(\xi)^2}}\right)$  is the unique solution. Moreover,  using the norm in $\mathfrak{H}^{\infty}_{c,\gamma}(G,H)$, we get 
  \begin{align*}
    \|T_0\|_{\mathfrak{H}^{\infty}_{c,\gamma}(G,H)}&=\|\mathcal{F}_U(S)\|_{L^2(\widehat{G})}.   
  \end{align*}
  Finally, we achieve the proof by  the support of the Plancerel theorem (Theorem \ref{Plancherel}). 
\end{proof}

\section{Conclusion}
We introduced a family of Sobolev spaces related to operators on a Hilbert space. We studied some of their properties, particularly the continuous embedding relationships that link them with other spaces of operators. We have provided applications to equations where the unknowns are operators. We are convinced that the results outlined here can be useful for applications in Physics.

\subsection*{Competing interests} The authors have no competing interests to declare that are relevant to the content of this article.

\end{document}